\newtheorem{theorem}{Theorem}
\newtheorem{lemma}[theorem]{Lemma}
\newtheorem{corollary}[theorem]{Corollary}
\newtheorem{proposition}[theorem]{Proposition}
\newtheorem{conjecture}[theorem]{Conjecture}
\renewcommand{\(}{\left(}
\renewcommand{\)}{\right)}
\def\SL{\mathrm{SL}}
\newcommand\be{\begin{equation}}
\newcommand\ee{\end{equation}}
\newcommand\bee{\begin{equation*}}
\newcommand\eee{\end{equation*}}
\newcommand{\defeq}{\vcentcolon=}
\title{Irreducibility of the zero polynomials of Eisenstein series}
\author{Oscar E. Gonz\'alez}
\address{Department of Mathematics, University of Illinois at Urbana-Champaign, Urbana, IL 61801}
\email{oscareg2@illinois.edu}
\begin{document}

\begin{abstract}
Let $E_k$ be the normalized Eisenstein series of weight $k$ on $\mathrm{SL}_{2}(\mathbb{Z})$. Let $\varphi_k$ be the polynomial that encodes the $j$-invariants of non-elliptic zeros of $E_k$. In 2001, Gekeler observed that the polynomials $\varphi_k$ seem to be irreducible (and verified this claim for $k\leq 446$). We show that $\varphi_k$ is irreducible for infinitely many $k$.
\end{abstract}
\maketitle

\section{Introduction}
Let $E_k$ be the Eisenstein series of even weight $k\geq 2$ on 
$\SL_{2}(\mathbb{Z})$
given by
\be
 E_k(z) \defeq 1-\frac{2k}{B_k} \sum_{n=1}^{\infty} \sigma_{k-1}(n) e^{2 \pi i n z},
\ee
and 
$\Delta \defeq \frac{1}{1728}(E_4^3-E_6^2)$.
Let $j \defeq \frac{E_4^3}{\Delta}$ and
define the polynomial $\varphi_k(X)$ by
\be
\label{eq:phidef}
\varphi_k(X) \defeq \prod_{\substack{E_k(z) = 0\\ j(z)\neq 0, 1728}} (X-j(z)).
\ee
For example, $\varphi_{16}(X) = X-\frac{3456000}{3617}$ and 
$\varphi_{24}(X) = X^2-\frac{340364160000 }{236364091}X+\frac{30710845440000}{236364091}$.
Gekeler observed that the polynomials $\varphi_k$ seem to be irreducible, and  verified this claim for  $k\leq446$ (\cite{gekeler}). 
Here we show that 
the polynomials $\varphi_k$
are irreducible for infinitely many $k$.
More precisely, we have the following theorem.
\begin{theorem}
\label{thm:main}
Let $\varphi_k$ be as in \eqref{eq:phidef}. 
Then $\varphi_{12\cdot 2^{\ell}}$ is irreducible for any 
$\ell \in\mathbb{N}\cup\{0\}$.
\end{theorem}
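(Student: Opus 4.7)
My plan is to prove that $\varphi_{12\cdot 2^{\ell}}$ is irreducible over $\mathbb{Q}_{2}$ via a 2-adic Newton polygon analysis; irreducibility over $\mathbb{Q}$ then follows immediately.

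The first step is to identify $\varphi_k$ with an explicit polynomial. For $k=12m$, the quotient $E_k/\Delta^m$ is a weight-zero modular function, holomorphic on $\mathbb{H}$ and with a pole of order exactly $m$ at $\infty$, so it equals $P_k(j)$ for a unique monic $P_k\in\mathbb{Q}[X]$ of degree $m$. For $k=12\cdot 2^{\ell}$ the valence formula, together with $k\equiv 0\pmod{12}$ (which forces $E_k(i),E_k(\rho)\neq 0$), gives $\deg\varphi_k=2^{\ell}$, so $\varphi_k=P_k$. The coefficients of $P_k$ can be read off from the $q$-expansion of $E_k$ after expanding in the basis $\{E_4^{3(2^{\ell}-i)}\Delta^i\}_{i=0}^{2^{\ell}}$ and using the identity $E_4^{3(m-i)}\Delta^i=\Delta^m j^{m-i}$.

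The next step is to show that the 2-adic Newton polygon of $\varphi_k$ consists of a single segment whose slope, in lowest terms, has denominator $2^{\ell}=\deg\varphi_k$, which forces irreducibility over $\mathbb{Q}_{2}$. Von Staudt--Clausen gives $v_2(B_k)=-1$ for even $k$, whence $v_2(-2k/B_k)=2+v_2(k)=4+\ell$ for $k=12\cdot 2^{\ell}$, so $E_k\equiv 1\pmod{2^{4+\ell}}$; this strong 2-adic congruence propagates through the basis expansion to lower-bound $v_2$ of every coefficient of $\varphi_k$. The critical numerical input is the valuation of the constant term: for $\ell=0$ one gets $\varphi_{12}(X)=X-432000/691$ with $v_2(\varphi_{12}(0))=7=2^{3}-1$; for $\ell=1$ the constant term of $\varphi_{24}$ has $v_2=15=2^{4}-1$ and its middle coefficient has $v_2=10\geq 15/2$, producing a single-segment polygon of slope $-15/2$, whose denominator $2$ equals $\deg\varphi_{24}$, so $\varphi_{24}$ is irreducible over $\mathbb{Q}_2$.

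The principal obstacle is to establish the pattern
\[
v_2(\varphi_{12\cdot 2^{\ell}}(0))=2^{\ell+3}-1
\]
uniformly in $\ell$, rather than merely a lower bound: the corresponding slope $-(2^{\ell+3}-1)/2^{\ell}$ has odd numerator and denominator $2^{\ell}$, as needed. The natural inductive tool is the identity
\[
\varphi_{2k}(X)=\varphi_k(X)^2-R_k(X),
\]
where $R_k\in\mathbb{Q}[X]$ has degree at most $2\cdot 2^{\ell}-1$ and comes from rewriting the cusp-form defect $(E_k^2-E_{2k})/\Delta^{2\cdot 2^{\ell}-1}$ as a polynomial in $j$. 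Upgrading $v_2(\varphi_k(0))=2^{\ell+3}-1$ to $v_2(\varphi_{2k}(0))=2^{\ell+4}-1$ would require a precise 2-adic cancellation between $\varphi_k(0)^{2}$ and $R_k(0)$: their unit parts must agree modulo $2$ but differ modulo $4$. Establishing this delicate cancellation, rather than the weaker bound $v_2\geq 2\, v_2(\varphi_k(0))$, demands finer 2-adic information about the cusp form $E_k^{2}-E_{2k}$ than Kummer congruences alone provide, and is the crux of the argument.
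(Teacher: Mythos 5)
Your overall strategy is the same as the paper's: identify $\varphi_k(j)$ with $E_k/\Delta^{k/12}$, and apply the $2$-adic Newton polygon criterion (the paper invokes it in the form of Dumas' irreducibility criterion), with the decisive input being that $\nu_2$ of the constant term equals $2^{\ell+3}-1=\tfrac{2k-3}{3}$, which is coprime to the degree $2^{\ell}$ and dominates the other slopes. You have correctly located the critical numerical fact. However, the proposal does not prove it: your inductive mechanism via $\varphi_{2k}=\varphi_k^2-R_k$ requires showing that $\varphi_k(0)^2$ and $R_k(0)$ have equal $2$-adic valuation with unit parts agreeing mod $2$ but not mod $4$, and you explicitly leave this cancellation unestablished, calling it the crux. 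As written, this is a plan with a gap at the only genuinely hard step, so it does not constitute a proof.

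It is worth noting how the paper sidesteps the cancellation you could not control. Instead of inducting on $\varphi_k$ itself, it works with the coefficients $w_{a,k}$ of $G_k=\sum_{4a+6b=k}w_{a,k}G_4^aG_6^b$. Two recurrences are used: Popa's recurrence, combined with Lucas'-theorem computations of $2$-adic valuations of the relevant binomial coefficients, shows that all $w_{a,k}$ are $2$-integral for every even $k$; then the classical Rademacher recurrence, with the terms for $p$ and $k/2-p$ paired (this is the weight-doubling step playing the role of your $\varphi_{2k}=\varphi_k^2-R_k$), shows that for $k=12\cdot 2^{\ell}$ the coefficient $w_{0,k}$ of $G_6^{k/6}$ is a $2$-adic unit while all other $w_{3a,k}$ have $\nu_2\geq 1$. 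The payoff is that when one expands $E_k/\Delta^{k/12}$ in powers of $j$, the constant term involves \emph{only} $w_{0,k}$ times an explicit power of $2$, so its valuation is computed exactly with no cancellation to control; the $2$-integrality of the remaining $w_{3a,k}$ gives the needed lower bounds on the other coefficients. If you want to complete your argument along your own lines, you would need an analogue of this "unit coefficient plus divisible remainder" structure; proving the exact equality $\nu_2(R_k(0))=2\nu_2(\varphi_k(0))$ together with the mod-$4$ discrepancy directly from properties of the cusp form $E_k^2-E_{2k}$ appears substantially harder than the route through the $G_4,G_6$ basis.
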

   Our method employs Dumas' irreducibility criterion and 
   a recurrence for the Eisenstein series $G_k$ due to Popa \cite{popa}.

\section{Preliminaries}
Let 
$G_k \defeq 2 \zeta(k) E_k$ where $\zeta(k) \defeq \sum_{n=1}^{\infty} \frac{1}{n^k}$.
For $k\geq 2$ write
\be
\label{eq:w}
G_k= \sum_{4a+6b=k} w_{a,k} G_4^{a} G_6^{b},
\ee
where $w_{a,k}\in\mathbb{Q}$.
We have the following recurrence for $G_k$.
\begin{proposition}
\label{prop:popa}
We have
\begin{multline}
\label{eq:recurrencePopa}
c_k d_k G_k  = 
\sum_{\substack{j=3\\ j \text{ odd}}}^{k/2-2} \(\binom{k/2}{j} + \binom{k/2-2}{j} \)d_{j+1}d_{k-j-1} G_{j+1} G_{k-j-1}\\
+(k-2) d_2 d_{k-2} G_{2}G_{k-2} + \frac{\pi^2 d_{k-2}}{2} G_{k-2}' 
+\frac{k}{2}d_{k/2}^2 G_{k/2}^2,
\end{multline}
where 
$$f' \defeq \frac{1}{2\pi i} \frac{df}{dz},$$
\be
\label{eq:ck}
c_k =\frac{k}{2(k/2+1)(k/2-1)} + (-1)^{k/2} \frac{(k/2)! (k/2-2)!}{2(k-1)!},
\ee
and
\be
\label{eq:dk}
d_k = \frac{(-1)^{k/2} (k-1)!}{2^{k+1}}.
\ee
\end{proposition}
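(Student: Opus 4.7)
The plan is to prove \eqref{eq:recurrencePopa} by working in the graded ring of quasi-modular forms $\mathbb{C}[G_2, G_4, G_6]$ on $\SL_2(\mathbb{Z})$. Both sides of \eqref{eq:recurrencePopa} are quasi-modular of weight $k$ (using that the normalized derivative $'$ raises weight by $2$, that $G_2$ is quasi-modular of weight $2$, and that the remaining pieces are modular), and the weight-$k$ component of this ring is finite-dimensional, so the identity reduces to checking equality of only finitely many Fourier coefficients for each $k$.

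Concretely, I would expand both sides as $q$-series using
\[
G_k(z) = 2\zeta(k) - \frac{2(2\pi i)^k}{(k-1)!}\sum_{n\geq 1} \sigma_{k-1}(n) q^n,
\]
together with $\zeta(2m) = (-1)^{m+1} B_{2m}(2\pi)^{2m}/(2\cdot (2m)!)$. The normalization $d_k = (-1)^{k/2}(k-1)!/2^{k+1}$ is chosen so that the powers of $\pi$ cancel cleanly. Comparing constant terms then reduces to a convolution identity among Bernoulli numbers, while comparing coefficients of $q^N$ reduces to Ramanujan-type convolution identities among divisor sums $\sigma_a(m)\sigma_b(N-m)$, together with the linear contribution $N\sigma_{k-3}(N)$ coming from $G_{k-2}'$ and the squared contribution from $G_{k/2}^2$.

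The main obstacle is pinning down the precise combinatorial coefficient $\binom{k/2}{j}+\binom{k/2-2}{j}$ and the constant $c_k$ in \eqref{eq:ck}: these do not fall out of a blind Fourier-coefficient match, and the Bernoulli convolution that controls the constant term is itself nontrivial. The cleanest source for both is the period-polynomial approach of \cite{popa}: the binomial factor arises from the $S$- and $U$-actions on the period polynomial of $G_k$, while $c_k$ and $d_k$ emerge as normalizations in the Eichler--Shimura integral representation. For a self-contained derivation one could instead verify \eqref{eq:recurrencePopa} in each fixed weight $k$ by the finite-dimensionality reduction above, accepting that this does not yield a uniform closed-form proof and that the existence of a clean identity in this form is explained naturally only by the period-polynomial viewpoint.
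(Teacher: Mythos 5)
Your proposal is a plan with two branches, and neither is carried to completion, so there is a genuine gap. The first branch (expand everything in $\mathbb{C}[G_2,G_4,G_6]$ and compare $q$-coefficients) cannot prove the proposition as stated: the identity is asserted for every even $k$, and while the weight-$k$ graded piece is finite-dimensional, its dimension grows like $k/12$, so "finitely many coefficients for each $k$" is an infinite family of computations, not a proof. Worse, the paper's own remark explains why this route is structurally blocked: the identity \eqref{eq:recurrencePopa} is equivalent (via Rankin's identity) to the Eichler--Shimura relations for odd periods of cusp forms, i.e.\ the nontrivial content is precisely that the cuspidal projection of the right-hand side vanishes. A blind convolution-of-divisor-sums computation would have to reprove that vanishing for all $N$ up to $\dim M_k$, which is as hard as the theorem itself. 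You correctly sense this when you concede that the coefficients $\binom{k/2}{j}+\binom{k/2-2}{j}$ and the constant $c_k$ "do not fall out of a blind Fourier-coefficient match."

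Your second branch --- appeal to the period-polynomial identity of \cite{popa} --- is exactly the paper's proof, which consists of specializing equation (A.3) of \cite{popa} at $m=k/2$, $\tilde{m}=k/2-2$, $w=k-2$ and performing "some computation" to land on the stated normalizations \eqref{eq:ck} and \eqref{eq:dk}. But you state this only as a pointer: you do not identify which identity in \cite{popa} is being used, what substitution of parameters produces the binomial factor $\binom{k/2}{j}+\binom{k/2-2}{j}$, or how $c_k$ and $d_k$ arise from Popa's normalization $G_k=\frac{\pi^k}{d_k}\widetilde{G_k}$. Since those constants are exactly what the rest of the paper's $2$-adic analysis depends on (Lemmas \ref{lem:valsum} and \ref{lem:ineq} manipulate $c_k$ and $d_k$ directly), the unverified translation is the entire substance of the proposition, and the proposal leaves it undone.
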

\begin{proof}
This follows after some computation from equation (A.3) in \cite{popa} with $m=k/2$, $\tilde{m}=k/2-2$, and $w=k-2$.
\end{proof}
{\it Remark}. The Eisenstein series $\widetilde{G_k}$ in \cite{popa} are normalized so that the coefficient of $q$ in their Fourier expansion equals $1$. This differs from our $G_k$ by a factor of 
$\pi^k/d_k$, so that 
$G_k = \frac{\pi^k}{d_k} \widetilde{G_k}$. The identity in Proposition \ref{prop:popa} is
equivalent (via Rankin’s identity) to the Eichler-Shimura relations for odd periods of cusp forms and $G_k$.

The following proposition allows us to relate 
$w_{a,k}$ to the coefficients of $\varphi_{k}$.
\begin{proposition}[\cite{gekeler}, (1.17.1)]
\label{prop:ekdelta}
For $k\equiv 0 \pmod{12}$ we  have 
\bee
2 \zeta(k) \varphi_k(j) = \frac{G_k}{\Delta^{\frac{k}{12}}}.
\eee
\end{proposition}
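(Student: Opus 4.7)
The plan is to show that both sides define the same weight-zero meromorphic modular function on $\mathrm{SL}_2(\mathbb{Z})$, by matching divisors on the compactified modular curve $X(1)$ and then fixing the overall constant by comparing leading $q$-coefficients. Since $\Delta$ has no zeros on $\mathbb{H}$ and a simple zero at the cusp, and since $G_k$ is holomorphic on $\mathbb{H}$ with $G_k(\infty)=2\zeta(k)\neq 0$, the quotient $G_k/\Delta^{k/12}$ is a weight-zero meromorphic modular function whose zeros on $\mathbb{H}$ coincide with those of $E_k$ and which has a pole of exact order $k/12$ at the cusp. On the other side, $\varphi_k(j(z))$ is a polynomial in $j$ of degree $d\defeq\deg\varphi_k$, hence holomorphic on $\mathbb{H}$, with a pole of order $d$ at the cusp, and (by \eqref{eq:phidef}) with zero set on $\mathbb{H}$ equal to the non-elliptic zeros of $E_k$.

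Next, I would invoke the valence formula. Because $k\equiv 0\pmod{12}$ and $E_k(\infty)=1\neq 0$, the weighted count of zeros of $E_k$ in a fundamental domain equals $k/12$. Writing $E_k=\sum_{4a+6b=k}c_{a,b}E_4^aE_6^b$, the only monomial nonvanishing at $i$ is $E_4^{k/4}$ and the only one nonvanishing at $\rho$ is $E_6^{k/6}$; provided the corner coefficients $c_{k/4,0}$ and $c_{0,k/6}$ are nonzero, $E_k$ has no elliptic zeros, so $d=k/12$ and the divisors of $2\zeta(k)\varphi_k(j)$ and $G_k/\Delta^{k/12}$ agree on $X(1)\cong\mathbb{P}^1$. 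Their ratio is therefore a constant.

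Finally, I would determine the constant by matching leading $q$-expansion terms: since $\varphi_k$ is monic and $j=q^{-1}+O(1)$, one gets $\varphi_k(j)=q^{-k/12}+O(q^{-k/12+1})$, while $G_k=2\zeta(k)+O(q)$ and $\Delta^{k/12}=q^{k/12}(1+O(q))$ give $G_k/\Delta^{k/12}=2\zeta(k)\,q^{-k/12}+O(q^{-k/12+1})$. The constant must therefore equal $2\zeta(k)$, yielding the claimed identity. The main obstacle is verifying the nonvanishing of the corner coefficients $c_{k/4,0}$ and $c_{0,k/6}$, which is what ensures that $E_k$ has no elliptic zeros; this can be carried out by an explicit Fourier-coefficient computation, or more conceptually by exploiting the graded structure of $\mathbb{C}[E_4,E_6]$ and the fact that $E_k$ has leading term $1$ at the cusp.
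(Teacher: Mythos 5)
The paper offers no proof of this proposition; it is quoted verbatim from Gekeler (1.17.1), so there is nothing internal to compare your argument against. Your strategy --- match divisors of the two weight-zero functions on $X(1)$ and then fix the constant from the leading $q$-coefficient --- is the standard route to such identities, and the parts you carry out (the identification of the non-elliptic zeros, the pole order at the cusp, the constant $2\zeta(k)$ from $\varphi_k$ being monic and $G_k = 2\zeta(k) + O(q)$, $\Delta = q + O(q^2)$) are correct.

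The genuine gap is exactly the point you flag and then defer: the non-vanishing of $E_k$ at the elliptic points $i$ and $\rho$, equivalently of the corner coefficients $c_{k/4,0}$ and $c_{0,k/6}$. This is not a loose end but the entire content of the proposition: if $E_k$ vanished at $i$ or $\rho$, then $\varphi_k$ (which by definition omits the factors $(X-1728)$ and $X$) would have degree strictly less than $k/12$ and the identity would be false. Neither of your proposed fixes closes the gap. The observation that $E_k$ has leading term $1$ at the cusp only yields $\sum_{a,b} c_{a,b} = 1$ (equivalently, that the polynomial $E_k/\Delta^{k/12}$ in $j$ is monic of degree $k/12$), and says nothing about individual corner coefficients. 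The explicit Fourier-coefficient route does work at $z=i$: for $4 \mid k$ one has $B_k<0$, hence $-2k/B_k>0$ and $E_k(i) = 1 - \frac{2k}{B_k}\sum_n \sigma_{k-1}(n)e^{-2\pi n} > 1$. But at $z=\rho$ the terms $\sigma_{k-1}(n)(-1)^n e^{-\pi n\sqrt{3}}$ alternate in sign and the tail is not uniformly negligible in $k$, so no positivity argument is available. Here one genuinely needs an external input, such as the Rankin--Swinnerton-Dyer localization of the zeros of $E_k$ on the arc $|z|=1$ (which, together with the valence formula, forces all $k/12$ zeros into the open arc when $12 \mid k$), or Gekeler's own argument. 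As written, your proof is incomplete precisely at its only nontrivial step.
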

Let $\nu_{p}(m)$ be the highest power of the prime $p$ that divides a non-zero integer $m$.
Extend this definition to $\mathbb{Q}$ by defining  $\nu_{p}(a/b)\defeq \nu_p(a)-\nu_p(b)$ and
$\nu_p(0) = \infty$.
We will use the following criterion of Dumas.
\begin{proposition}[{\cite[Corollary 1.3]{dumasgen}}]
\label{prop:dumas}
Let $F(x) =  x^n + a_{n-1}x^{n-1} + \cdots + a_0$ be a monic polynomial with coefficients in $\mathbb{Q}$.
If there exists a prime $p$  
satisfying 
\begin{enumerate}[label=(\roman*)]
\item $\frac{\nu_p(a_r)}{n -r} \geq \frac{\nu_p(a_0)}{n}$ for $0\leq r\leq n-1$, and
\item $(\nu_p(a_0), n) = 1$,
\end{enumerate}
then $F(x)$ is irreducible over $\mathbb{Q}$. 
\end{proposition}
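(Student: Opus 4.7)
The plan is to apply Dumas' criterion (Proposition~\ref{prop:dumas}) with $p = 2$ to $\varphi_k$ for $k = 12 \cdot 2^\ell$. Since $k \equiv 0 \pmod{12}$, one has $\deg\varphi_k = k/12 = 2^\ell$, so it suffices to show that the $2$-adic Newton polygon of $\varphi_k$ is a single segment from $(0, N_\ell)$ to $(2^\ell, 0)$ with $N_\ell$ odd. The case $\ell = 0$ is trivial (degree $1$), so I assume $\ell \geq 1$.

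First I would set up a dictionary between $w_{a,k}$ and the coefficients of $\varphi_k$. Combining Proposition~\ref{prop:ekdelta} with \eqref{eq:w} and the identities $E_4^3 = j\Delta$, $E_6^2 = (j - 1728)\Delta$, one obtains
\be
\varphi_k(X) \;=\; \sum_{s=0}^{2^\ell} W_s \, X^{2^\ell - s}(X - 1728)^s,
\ee
where each $W_s \in \mathbb{Q}$ is a rational multiple of $w_{3(2^\ell - s),\,k}$ (the $\pi$-powers from the $2\zeta$-factors cancel because $4 \cdot 3(2^\ell - s) + 6 \cdot 2 s = k$). Expanding, the coefficient of $X^{2^\ell - r}$ in $\varphi_k$ equals $(-1728)^r \sum_{s \geq r} \binom{s}{r} W_s$; in particular the constant term is $a_0 = 1728^{2^\ell} W_{2^\ell}$, and monicity forces $\sum_s W_s = 1$. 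Next, by induction on $k$ via Popa's recurrence (Proposition~\ref{prop:popa}), I would establish a lower bound $\nu_2(w_{3(2^\ell - s),\,k}) \geq L(\ell, s)$, with equality at $s = 2^\ell$ and $L(\ell, 2^\ell)$ odd. The ingredients are explicit $\nu_2(c_k)$ and $\nu_2(d_k)$ from \eqref{eq:ck}--\eqref{eq:dk} through Legendre's factorial formula; valuations of $\binom{k/2}{j} + \binom{k/2-2}{j}$ via Kummer's theorem; re-expansion of each product $G_{j+1} G_{k-j-1}$ in the $G_4^a G_6^b$-basis; and Ramanujan's identities for $G_4'$, $G_6'$, $G_2$ to rewrite the derivative term $G_{k-2}'$ inside $\mathbb{Q}[G_4, G_6]$.

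Combining these two steps with $\nu_2(1728) = 6$, I would verify that $\nu_2(a_{2^\ell - r}) / r \geq \nu_2(a_0) / 2^\ell$ for $1 \leq r \leq 2^\ell$ and that $\nu_2(a_0) = 6 \cdot 2^\ell + \nu_2(W_{2^\ell})$ is odd; Dumas' criterion then yields irreducibility of $\varphi_k$. The main obstacle is the $2$-adic induction: Popa's recurrence mixes all even weights $\leq k$, so the induction cannot be restricted to $k$ of the form $12 \cdot 2^{\ell'}$, and a sharp $2$-adic invariant must be propagated through every intermediate weight while managing the competing contributions of the many binomial coefficients and of the square term $\tfrac{k}{2} d_{k/2}^2\, G_{k/2}^2$, which re-enters $\varphi_k$ precisely at the extremal index $s = 2^\ell$.
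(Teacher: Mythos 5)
Your proposal does not prove the statement it was supposed to prove. The assigned statement is Proposition~\ref{prop:dumas} itself --- Dumas' irreducibility criterion --- but your argument never engages with it: instead you outline how to \emph{apply} the criterion with $p=2$ to $\varphi_{12\cdot 2^{\ell}}$, i.e., you sketch a proof of Theorem~\ref{thm:main} while taking Proposition~\ref{prop:dumas} as a black box. Relative to the assigned statement this is circular: the criterion appears as your hypothesis, not your conclusion, so nothing in the proposal constitutes a proof of it. (For the record, the paper itself offers no proof either --- it quotes the result from the cited source --- and as an outline of the \emph{main theorem} your plan does broadly track the paper's actual route, via $2$-adic lower bounds on the $w_{a,k}$ propagated through Popa's recurrence and a dictionary to the coefficients $t_{k,r}$; the paper does this in Theorem~\ref{thm:minGk}, Lemma~\ref{lem:ckidki}, Lemma~\ref{lem:dk0andr}, and Corollary~\ref{cor:v2t}, expanding $G_k/\Delta^{k/12}$ directly rather than using your $X^{2^{\ell}-s}(X-1728)^s$ basis. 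But that is a different statement than the one you were asked to prove.)

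What a blind proof of the proposition should contain is a short Newton-polygon (or root-valuation) argument. Condition (i) says that every point $(r,\nu_p(a_r))$ lies on or above the line through $(0,\nu_p(a_0))$ and $(n,0)$, so the $p$-adic Newton polygon of $F$ is that single segment; hence every root $\alpha\in\overline{\mathbb{Q}}_p$ of $F$ satisfies $\nu_p(\alpha)=\nu_p(a_0)/n$, where $\nu_p$ is extended to $\overline{\mathbb{Q}}_p$. If $F=GH$ with $G,H\in\mathbb{Q}[x]$ monic and $m=\deg G$ satisfying $0<m<n$, then $G(0)\in\mathbb{Q}$ is, up to sign, a product of $m$ such roots, so $\nu_p(G(0))=m\,\nu_p(a_0)/n$; since $\nu_p$ takes integer values on $\mathbb{Q}^{\times}$, this forces $n\mid m\,\nu_p(a_0)$, and condition (ii), $\gcd(\nu_p(a_0),n)=1$, then gives $n\mid m$, a contradiction. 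Equivalently, one can invoke Dumas' theorem that the Newton polygon of a product is the concatenation of the Newton polygons of the factors, and observe that (ii) means the segment has no interior lattice points, so it cannot split at a vertex. Either version is two or three lines; your proposal contains neither.
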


\section{Divisibility results}
In this section we prove results about the divisibility of certain binomial coefficients which will be needed in the proof of Theorem \ref{thm:main}.

\begin{lemma}
\label{lem:valsum}
For any even $k\geq 4$ we have
 $$
\nu_{2}\((-1)^{k/2} +\binom{k}{k/2-1}\) = \begin{cases}
1  &\quad \text{ if } k+2 = 2^\ell \text{ for some } \ell\geq3 ,\\
0  &\quad \text{ otherwise.}
\end{cases}
$$
\end{lemma}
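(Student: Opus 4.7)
Setting $m = k/2$, the plan is to first compute $\nu_2\binom{2m}{m-1}$ via Legendre's formula,
$$\nu_2\binom{2m}{m-1} = s_2(m-1) + s_2(m+1) - s_2(m),$$
where $s_2(\cdot)$ denotes the binary digit sum. When $m$ is even, $m\pm 1$ are both odd, so their base-$2$ addition carries, forcing $\binom{2m}{m-1}$ to be even; since $(-1)^m = 1$, the sum $1 + \binom{2m}{m-1}$ is odd and $\nu_2 = 0$. Note that $k+2 = 2(m+1)$ with $m+1$ odd and $\geq 3$, so $k+2$ is not a power of $2$, consistent with the claim. When $m$ is odd with $r \geq 1$ trailing $1$-bits, direct bookkeeping gives $s_2(m-1) = s_2(m) - 1$ and $s_2(m+1) = s_2(m) - r + 1$, whence $\nu_2\binom{2m}{m-1} = s_2(m) - r$. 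This vanishes iff $s_2(m) = r$, i.e.\ $m = 2^n - 1$; for every other odd $m$, $\binom{2m}{m-1}$ is even, $-1 + \binom{2m}{m-1}$ is odd, and $\nu_2 = 0$, while $k+2 = 2(m+1)$ is again not a power of $2$.

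The remaining case is $m = 2^n - 1$ with $n \geq 2$ (corresponding to $k + 2 = 2^{n+1}$, i.e.\ $\ell = n+1 \geq 3$); here I must establish $\binom{2m}{m-1} \equiv 3 \pmod{4}$ to obtain $\nu_2\bigl(-1 + \binom{2m}{m-1}\bigr) = 1$. The plan is to use the product expression
$$\binom{2m}{m-1} = \binom{2^{n+1} - 2}{\,2^n\,} = \prod_{i=1}^{2^n - 2} \left(1 + \frac{2^n}{i}\right)$$
and expand $2$-adically. Each factor $a_i \defeq 2^n/i$ satisfies $\nu_2(a_i) = n - \nu_2(i) \geq 1$, with equality only for the unique index $i = 2^{n-1}$ (giving $a_i = 2$); every other $a_i$ lies in $4\mathbb{Z}_2$. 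Writing $\prod(1 + a_i) = 1 + e_1 + e_2 + \cdots$ with $e_j$ the elementary symmetric polynomial in the $a_i$, the term $e_1 = \sum_i a_i$ is $\equiv 2 \pmod{4}$, while for $j \geq 2$ each $j$-fold product has $\nu_2 \geq 1 + 2(j-1) \geq 3$, so $e_j \equiv 0 \pmod{4}$. Hence $\binom{2m}{m-1} \equiv 3 \pmod{4}$, as required.

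The main obstacle is this final mod-$4$ computation: merely knowing $\binom{2m}{m-1}$ is odd does not pin down $\nu_2\bigl(-1 + \binom{2m}{m-1}\bigr)$, and the precise residue requires the above $2$-adic expansion together with the carry-count bound ensuring that the higher symmetric functions $e_j$ for $j \geq 2$ cannot contaminate the residue modulo $4$.
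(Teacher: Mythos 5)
Your proof is correct, and while it follows the same two-step skeleton as the paper --- first determine the parity of $\binom{k}{k/2-1}$, then pin down its residue modulo $4$ in the case $k+2=2^{\ell}$ --- the execution of the second step is genuinely different. For the parity step the paper invokes Lucas' theorem where you use Legendre's formula and the carry count; these are essentially interchangeable, and both correctly isolate $k/2=2^{n}-1$ as the only case where the binomial coefficient is odd. The real divergence is the mod-$4$ computation: the paper simply cites Granville's theorem on binomial coefficients modulo prime powers to obtain $\binom{k}{k/2-1}\equiv 3\pmod{4}$, whereas you derive this congruence from scratch by writing $\binom{2^{n+1}-2}{2^{n}}=\prod_{i=1}^{2^{n}-2}(1+2^{n}/i)$ and controlling the $2$-adic valuations of the elementary symmetric functions of the quantities $2^{n}/i$. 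Your observation that exactly one factor contributes valuation $1$ (namely $i=2^{n-1}$, giving the term $2$ in $e_{1}$) while every $j$-fold product with $j\geq 2$ has valuation at least $2j-1\geq 3$ is exactly what is needed, and you are right that this is the crux: oddness alone does not determine $\nu_{2}\bigl(-1+\binom{k}{k/2-1}\bigr)$. Both routes are valid; yours makes the lemma self-contained at the cost of a slightly longer argument, while the paper's citation buys brevity.
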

\begin{proof}
By Lucas' theorem
(see, for example, \cite[Thm. 1]{FineBinomial})
 we have
 $$
\binom{k}{k/2-1}  \equiv  \begin{cases}
1 \pmod{2} &\quad \text{ if } k+2 = 2^\ell \text{ for some } \ell\geq 3 ,\\
0 \pmod{2} &\quad \text{ otherwise.}
\end{cases}
$$
When
 $k+2 = 2^\ell$ with $\ell\geq 3$
we see from \cite[Theorem 1]{granville} that
 $\binom{k}{k/2-1}\equiv 3 \pmod{4}$. Then,
 $(-1)^{k/2} +\binom{k}{k/2-1} \equiv 2 \pmod{4}$
  and the result follows. 
\end{proof}

\begin{lemma}
\label{lem:ineq}
Let $c_k$ and $d_k$ be as in \eqref{eq:ck} and \eqref{eq:dk}, respectively.
Then, for any even $k\geq 4$ and any $j\leq k/2-2$ the following inequalities hold:
\be
\label{eq:greater1}
\nu_{2}\(\frac{d_{k-2}}{2c_kd_k}\) \geq 1,
\ee
\be
\label{eq:greater0}
\nu_{2}\(\frac{kd_{k/2}^2}{2c_kd_k}\) \geq 0,
\ee
\be
\label{eq:sumineq}
\nu_{2}\( \(\binom{k/2}{j} + \binom{k/2-2}{j} \) \frac{d_{j+1}d_{k-j-1}}{c_kd_k} \) \geq 0.
\ee
\end{lemma}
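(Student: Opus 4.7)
The plan is to find a clean closed form for $c_k d_k$. Multiplying $c_k$ by $\frac{2(k-1)!}{(k/2)!(k/2-2)!}$, the first term $\frac{k}{2(k/2+1)(k/2-1)}$ reduces to $\binom{k}{k/2-1}$ after the factorials telescope, while the second term reduces to $(-1)^{k/2}$. Setting $A_k \defeq (-1)^{k/2} + \binom{k}{k/2-1}$, this gives
\[
c_k = \frac{A_k\,(k/2)!\,(k/2-2)!}{2(k-1)!}, \qquad c_k d_k = \frac{(-1)^{k/2}\,A_k\,(k/2)!\,(k/2-2)!}{2^{k+2}},
\]
so the $2$-adic valuation of $A_k$ (the exact quantity governed by Lemma \ref{lem:valsum}) controls everything. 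Substituting the explicit values of $d_{k-2}$, $d_{k/2}$, $d_{j+1}$, $d_{k-j-1}$ and cancelling factorials and powers of two, the three ratios in \eqref{eq:greater1}--\eqref{eq:sumineq} collapse to
\[
\frac{d_{k-2}}{2 c_k d_k} = \frac{-4\binom{k-2}{k/2}}{(k-2)A_k}, \qquad \frac{k d_{k/2}^2}{2 c_k d_k} = \frac{k-2}{2A_k},
\]
\[
\frac{\bigl(\binom{k/2}{j}+\binom{k/2-2}{j}\bigr)d_{j+1}d_{k-j-1}}{c_k d_k} = \frac{\binom{k-2-j}{k/2}+\binom{k-2-j}{k/2-2}}{A_k}.
\]

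Each inequality then reduces to an elementary $2$-adic check. For \eqref{eq:greater0} the middle identity makes it equivalent to $\nu_2(k-2) \geq 1 + \nu_2(A_k)$, which is immediate when $\nu_2(A_k) = 0$ and, when $k+2 = 2^\ell$, follows from $k-2 = 4(2^{\ell-2}-1)$. For \eqref{eq:greater1}, combining $\binom{k-2}{k/2} = \binom{k-2}{k/2-1}\cdot(k/2-1)/(k/2)$ with the classical $\nu_2\bigl(\binom{2n}{n}\bigr) = s_2(n)$ at $n = k/2-1$ and the relation $s_2(k/2-1) = s_2(k/2) + \nu_2(k/2) - 1$ yields $\nu_2\bigl(\binom{k-2}{k/2}\bigr) = s_2(k/2) + \nu_2(k/2-1) - 1$. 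Since $\nu_2(k-2) = 1 + \nu_2(k/2-1)$, inequality \eqref{eq:greater1} collapses to $s_2(k/2) \geq 1 + \nu_2(A_k)$, trivial in the generic case and, when $k+2 = 2^\ell$, holding because $s_2(k/2) = s_2(2^{\ell-1}-1) = \ell-1 \geq 2$.

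The main obstacle is \eqref{eq:sumineq}, which must hold uniformly in $j$. When $k+2 \neq 2^\ell$, Lemma \ref{lem:valsum} gives $\nu_2(A_k) = 0$ and the inequality is automatic. When $k+2 = 2^\ell$, I have to show that $S \defeq \binom{k-2-j}{k/2}+\binom{k-2-j}{k/2-2}$ is even for every odd $j$ with $3 \le j \le k/2-2$. The two binomials differ only in bit $1$ of the lower index, since $k/2 = 2^{\ell-1}-1$ has bit $1$ equal to $1$ while $k/2-2 = 2^{\ell-1}-3$ has bit $1$ equal to $0$. Lucas's theorem therefore gives $S \equiv P(1 + n_1) \pmod{2}$, where $n_1$ is bit $1$ of $N \defeq k-2-j$ and $P$ is the Lucas product over the remaining bit positions of $k/2-2$. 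A case split on $j \bmod 4$ finishes the proof: when $j \equiv 1 \pmod 4$ one gets $n_1 = 1$; when $j \equiv 3 \pmod 4$, the explicit binary shape $k/2-2 = \underbrace{1\cdots 1}_{\ell-3}01_2$ together with the constraint $j \le 2^{\ell-1}-3$ forces at least one required bit of $N$ to vanish, so $P = 0$. Translating the upper bound on $j$ into a binary-digit statement is the delicate step, and it is the reason the estimate in Lemma \ref{lem:valsum} ($\nu_2(A_k) \leq 1$) is just strong enough to make everything work.
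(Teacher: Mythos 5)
Your proposal is correct and follows essentially the same route as the paper: both reduce each ratio to an expression with denominator $(-1)^{k/2}+\binom{k}{k/2-1}$ (your $A_k$, controlled by Lemma \ref{lem:valsum}), arrive at the same simplified forms $\frac{k/2-1}{A_k}$ and $\frac{\binom{k-j-2}{k/2}+\binom{k-j-2}{k/2-2}}{A_k}$, obtain the same quantity $s_2(k/2)-\nu_2(A_k)$ for \eqref{eq:greater1}, and settle \eqref{eq:sumineq} via Lucas' theorem when $k+2=2^\ell$. The only cosmetic differences are that you state the closed form $c_kd_k=\frac{(-1)^{k/2}A_k(k/2)!(k/2-2)!}{2^{k+2}}$ up front and treat the two binomials in \eqref{eq:sumineq} jointly through a $j\bmod 4$ case split, where the paper checks each one's parity separately.
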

\begin{proof}
Recall that for any $m\in\mathbb{N}$ we have
\be
\label{eq:factorialv}
\nu_{2}(m!) = m-s_2(m),
\ee
where $s_2(m)$ is the sum of the digits of $m$ in base $2$.
We begin by rewriting
$\frac{d_{k-2}}{2c_kd_k}$ as
$$
\frac{d_{k-2}}{2c_kd_k}
= 
\frac{-2(k-2)!}{(k/2-1)!(k/2)! \((-1)^{k/2} +\binom{k}{k/2-1}\)}.
$$
Using \eqref{eq:factorialv} and Lemma \ref{lem:valsum}
we see that 
\begin{align*}
\nu_{2}\(\frac{d_{k-2}}{2c_kd_k}\)
&= 1+\nu_{2}\((k-2)!\)-\nu_{2}\((k/2-1)!\)-\nu_{2}\((k/2)!\)
-\nu_{2}\((-1)^{k/2} + \binom{k}{k/2-1}\)\\
&= s_2(k/2)
-\nu_{2}\((-1)^{k/2} + \binom{k}{k/2-1}\)\\
&=\begin{cases}
s_2(k/2)-1 &\quad \text{ if } k+2 = 2^\ell, \\
s_2(k/2) &\quad \text{ otherwise.}
\end{cases}
\end{align*}
Clearly $s_2(k/2)\geq 1$, so we only need to consider the case when
$k+2 = 2^\ell$.
Since in this case $s_2(k/2 )= s_2(2^{\ell-1}-1) \geq 2$, we conclude that
$\nu_{2}\(\frac{d_{k-2}}{2c_kd_k}\)  \geq 1$, proving \eqref{eq:greater1}.

To show \eqref{eq:greater0}, we start by rewriting
$\frac{kd_{k/2}^2}{2c_kd_k}$
as
$$
\frac{kd_{k/2}^2}{2c_kd_k}
=
\frac{(k/2-1) }{(-1)^{k/2} +\binom{k}{k/2-1}}.
$$

From this we see that 
\begin{align*}
\nu_{2}\(\frac{kd_{k/2}^2}{2c_kd_k}\) 
&= \nu_2(k/2-1) - \nu_2\((-1)^{k/2} +\binom{k}{k/2-1}\)\\
&=\begin{cases}
 \nu_2(k/2-1)-1 &\quad \text{ if } k+2 = 2^\ell, \\
 \nu_2(k/2-1) &\quad \text{ otherwise.}
\end{cases}
\end{align*}
When
$k+2 = 2^\ell$ we have $\nu_2(k/2 -1)= \nu_2(2^{\ell-1}-2) = 1$. Thus, 
$\nu_{2}\(\frac{d_{k-2}}{2c_kd_k}\)  \geq 0$.

Finally, to prove \eqref{eq:sumineq} we work
with
$\binom{k/2}{j}  \frac{d_{j+1}d_{k-j-1}}{c_kd_k} $
and
$ \binom{k/2-2}{j} \frac{d_{j+1}d_{k-j-1}}{c_kd_k} $
separately.
First, rewrite 
$ \binom{k/2}{j}  \frac{d_{j+1}d_{k-j-1}}{c_kd_k}$ as
$$
 \binom{k/2}{j}  \frac{d_{j+1}d_{k-j-1}}{c_kd_k} 
=\frac{\binom{k-j-2}{k/2-2}}{(-1)^{k/2}+\binom{k}{k/2-1}}.
$$
Then,
\begin{align*}
\nu_{2}\(\binom{k/2}{j}  \frac{d_{j+1}d_{k-j-1}}{c_kd_k}\)
&=
\nu_{2}\(\binom{k-j-2}{k/2-2}\) -\nu_{2}\((-1)^{k/2}+\binom{k}{k/2-1}\)\\
&=\begin{cases}
\nu_{2}\(\binom{k-j-2}{k/2-2}\) -1 &\quad \text{ if } k+2 = 2^\ell, \\
\nu_{2}\(\binom{k-j-2}{k/2-2}\)  &\quad \text{ otherwise.}
\end{cases}
\end{align*}
When
$k+2 = 2^\ell$ we can use Lucas' theorem to obtain
$$
\binom{k-j-2}{k/2-2} = \binom{2^{\ell}-j-4}{2^{\ell-1}-3} \equiv 
\begin{cases}
0 \pmod{2} &\quad \text{ if } j< k/2-2,\\
1 \pmod{2} &\quad \text{ if } j= k/2-2.
\end{cases}
$$
Therefore, when
$k+2 = 2^\ell$ and $j< k/2-2$ we have
$
\nu_{2}\(\binom{k/2}{j}  \frac{d_{j+1}d_{k-j-1}}{c_kd_k}\) \geq 0$ and
when
$j=k/2-2$ we have 
$\nu_{2}\(\binom{k/2}{j}  \frac{d_{j+1}d_{k-j-1}}{c_kd_k}\) = -1$. We will see below that 
in the case
$j=k/2-2$ we have $\nu_{2}\( \binom{k/2-2}{j}  \frac{d_{j+1}d_{k-j-1}}{c_kd_k} \) = -1$, so that
$\nu_{2}\( \(\binom{k/2}{j} + \binom{k/2-2}{j} \) \frac{d_{j+1}d_{k-j-1}}{c_kd_k} \) \geq 0$.

Now we calculate
$\nu_{2}\( \binom{k/2-2}{j}  \frac{d_{j+1}d_{k-j-1}}{c_kd_k} \)$.
We start by rewriting 
$ \binom{k/2-2}{j}  \frac{d_{j+1}d_{k-j-1}}{c_kd_k}$ as
$$
 \binom{k/2-2}{j}  \frac{d_{j+1}d_{k-j-1}}{c_kd_k} 
=\frac{\binom{k-j-2}{k/2}}{(-1)^{k/2}+\binom{k}{k/2-1}}.
$$
Thus,
\begin{align*}
\nu_{2}\(\binom{k/2-2}{j}  \frac{d_{j+1}d_{k-j-1}}{c_kd_k}\)
&=
\nu_{2}\(\binom{k-j-2}{k/2}\) -\nu_{2}\((-1)^{k/2}+\binom{k}{k/2-1}\)\\
&=\begin{cases}
\nu_{2}\(\binom{k-j-2}{k/2}\) -1 &\quad \text{ if } k+2 = 2^\ell, \\
\nu_{2}\(\binom{k-j-2}{k/2}\)  &\quad \text{ otherwise.}
\end{cases}
\end{align*}
When
$k+2 = 2^\ell$ we have
$$
\binom{k-j-2}{k/2} = \binom{2^{\ell}-j-4}{2^{\ell-1}-1} \equiv 
\begin{cases}
0 \pmod{2} &\quad \text{ if } j< k/2-2,\\
1 \pmod{2} &\quad \text{ if } j= k/2-2.
\end{cases}
$$
Therefore, when
$k+2 = 2^\ell$ and $j< k/2-2$ we have
$
\nu_{2}\(\binom{k/2-2}{j}  \frac{d_{j+1}d_{k-j-1}}{c_kd_k}\) \geq 0$ and
when
$j=k/2-2$ we have 
$\nu_{2}\(\binom{k/2}{j}  \frac{d_{j+1}d_{k-j-1}}{c_kd_k}\) = -1$.
\end{proof}

\section{Proof of Theorem \ref{thm:main}}
We start by showing that $\min_{a}\(\nu_2(w_{a,k})\) \geq 0$ for all even $k$.
\begin{theorem}
\label{thm:minGk}
Let $w_{a,k}$ be as in \eqref{eq:w} and let
$k\geq 4$ be an even integer.
Then $\min_{a}\(\nu_2(w_{a,k})\) \geq 0$.
\end{theorem}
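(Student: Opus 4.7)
The plan is to proceed by strong induction on even $k$, with Popa's recurrence (Proposition \ref{prop:popa}) driving the inductive step. The base cases $k=4$ and $k=6$ are trivial: the spaces of modular forms of these weights on $\SL_{2}(\mathbb{Z})$ are one-dimensional, so $G_{4}=G_{4}$ and $G_{6}=G_{6}$ give $w_{1,4}=w_{0,6}=1$, which both have $\nu_{2}=0$.

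For the inductive step I fix an even $k\geq 8$ and assume $\min_{a}\nu_{2}(w_{a,k'})\geq 0$ for every even $k'$ with $4\leq k'<k$. Solving \eqref{eq:recurrencePopa} for $G_{k}$ expresses it as a $\mathbb{Q}$-linear combination of the terms $G_{j+1}G_{k-j-1}$, $G_{k/2}^{2}$, $G_{2}G_{k-2}$, and $G_{k-2}'$. The last two are quasi-modular, but they combine into a genuine modular form via the Serre derivative $\theta_{k-2}(f)\defeq f'-\tfrac{k-2}{12}E_{2}f$: using $G_{2}=\tfrac{\pi^{2}}{3}E_{2}$ and $d_{2}=-\tfrac{1}{8}$, one checks directly that
\begin{equation*}
\tfrac{\pi^{2}d_{k-2}}{2}\,G_{k-2}' + (k-2)d_{2}d_{k-2}\,G_{2}G_{k-2} \;=\; \tfrac{\pi^{2}d_{k-2}}{2}\,\theta_{k-2}(G_{k-2}),
\end{equation*}
so the $G_{2}$-piece cancels and \eqref{eq:recurrencePopa} becomes an identity among modular forms of weight $k$.

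Next I would expand each summand in the basis $\{G_{4}^{a}G_{6}^{b}\}$. Ramanujan's identities for $\partial E_{4}$ and $\partial E_{6}$ give $\theta_{4}(G_{4})=-\tfrac{7}{2\pi^{2}}G_{6}$ and $\theta_{6}(G_{6})=-\tfrac{15}{7\pi^{2}}G_{4}^{2}$, so by the Leibniz rule for $\theta$,
\begin{equation*}
\pi^{2}\,\theta_{k-2}\!\left(G_{4}^{a}G_{6}^{b}\right) \;=\; -\tfrac{7a}{2}\,G_{4}^{a-1}G_{6}^{b+1} \;-\; \tfrac{15b}{7}\,G_{4}^{a+2}G_{6}^{b-1}.
\end{equation*}
The $\pi^{2}$ on the left cancels the $\pi^{2}$ in front of the Serre-derivative term, while the rational factors $7a/2$ and $15b/7$ have $\nu_{2}\geq -1$. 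Combined with the inductive bound $\nu_{2}(w_{a',k-2})\geq 0$, the contribution of the Serre-derivative term to any $w_{a,k}$ has $\nu_{2}$ at least $-1+\nu_{2}\!\left(\tfrac{d_{k-2}}{2c_{k}d_{k}}\right)$, and the inequality \eqref{eq:greater1} absorbs exactly this loss of $1$. The contributions from $G_{k/2}^{2}$ and $G_{j+1}G_{k-j-1}$ are controlled respectively by \eqref{eq:greater0} and \eqref{eq:sumineq}, and the internal $w$-coefficients are non-negatively valued by the inductive hypothesis. Because the $2$-adic valuation of a sum is at least the minimum of the valuations of its summands, this yields $\nu_{2}(w_{a,k})\geq 0$.

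The main obstacle is the derivative $G_{k-2}'$: a priori Popa's recurrence is only an identity of quasi-modular forms, and it is not obvious how to extract the $\{G_{4}^{a}G_{6}^{b}\}$-expansion of $G_{k}$ from it. The Serre-derivative reformulation above is what converts it into a usable identity, and it is precisely the strict bound $\nu_{2}(d_{k-2}/(2c_{k}d_{k}))\geq 1$ (rather than the weaker $\geq 0$) proved in Lemma \ref{lem:ineq} that absorbs the factor of $2$ in the denominator of $\theta_{4}(G_{4})$. After that structural step, the rest is a routine combination of the induction hypothesis with the inequalities \eqref{eq:greater0} and \eqref{eq:sumineq}.
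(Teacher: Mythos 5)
Your proposal is correct and follows essentially the same route as the paper: strong induction driven by Popa's recurrence, with the three inequalities of Lemma \ref{lem:ineq} controlling the coefficients and the induction hypothesis controlling the internal $w$'s. Your Serre-derivative packaging of the $G_{2}G_{k-2}$ and $G_{k-2}'$ terms is just a tidier way of expressing the same cancellation the paper performs explicitly (the coefficient $2d_{2}(r-2)+a+\tfrac{3b}{2}$ of $G_{2}G_{4}^{a}G_{6}^{b}$ vanishes because $4a+6b=r-2$ and $d_{2}=-\tfrac18$), leading to the identical expansion $-\tfrac{7a}{2}G_{4}^{a-1}G_{6}^{b+1}-\tfrac{15b}{7}G_{4}^{a+2}G_{6}^{b-1}$ and the same use of \eqref{eq:greater1} to absorb the factor of $2$ in the denominator.
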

\begin{proof}
By induction. The base case is clear.
Suppose that r is even and that $\min_{a}\(\nu_2(w_{a,k})\) \geq 0$
for all even  $k \leq r-2$.
We will show that $\min_{a}\(\nu_2(w_{a,r})\) \geq 0$.
From  \eqref{eq:recurrencePopa}
we have
\begin{multline*}
 G_r  = 
\sum_{\substack{j=3\\ j \text{ odd}}}^{r/2-2} \(\binom{r/2}{j} + \binom{r/2-2}{j} \)\frac{d_{j+1}d_{r-j-1}}{c_r d_r} G_{j+1} G_{r-j-1}\\
+(r-2) \frac{d_2 d_{r-2}}{c_r d_r} G_{2}G_{r-2} + \frac{\pi^2 d_{r-2}}{2c_r d_r} G_{r-2}' 
+\frac{rd_{r/2}^2}{2c_r d_r} G_{r/2}^2.
\end{multline*}
Note that
$\pi^2 G_{4}' = G_2G_4 - \frac{7}{2}G_6$
and 
$\pi^2 G_{6}' = \frac{3}{2} G_2G_6 - \frac{15}{7} G_{4}^2$.
From \eqref{eq:w} we see that
$G_{r-2}= \sum_{4a+6b=r-2} w_{a,r-2} G_4^{a} G_6^{b}$. 
Since 
$(G_4^{a} G_6^{b})' =a G_4^{a-1} G_6^b G_4' + b G_4^aG_6^{b-1} G_6'$
and
$d_2 = -1/8$
we have 
\begin{align*}
&(r-2) \frac{d_2 d_{r-2}}{c_r d_r} G_{2}G_{r-2} + \frac{\pi^2 d_{r-2}}{2c_r d_r} G_{r-2}'\\
&=
\frac{d_{r-2}}{2c_rd_r}  \sum_{4a+6b=r-2} w_{a,r-2} 
\(
G_2G_4^aG_6^b\(2d_2(r-2) + a+\frac{3b}{2}\)
-\frac{7a}{2} G_4^{a-1} G_{6}^{b+1} -\frac{15b}{7} G_4^{a+2} G_6^{b-1}
\)\\ 
&=-\frac{d_{r-2}}{2c_rd_r}  \sum_{4a+6b=r-2} w_{a,r-2} 
\(
\frac{7a}{2} G_4^{a-1} G_{6}^{b+1} + \frac{15b}{7} G_4^{a+2} G_6^{b-1}
\).
\end{align*}
We also see from  \eqref{eq:w}  that
\bee
G_{j+1} G_{r-j-1} = 
\(\sum_{4a+6b=j+1} w_{a,j+1} G_4^{a} G_6^{b}\)
\(\sum_{4a+6b=r-j-1} w_{a,r-j-1} G_4^{a} G_6^{b}\).
\eee
Using Lemma \ref{lem:ineq} and the induction hypothesis
we conclude that $\min_{a}\( \nu_2(w_{a,r})\)  \geq 0$.
\end{proof}
We have the following precise conjecture about 
the value of $\min_{a}\(\nu_2(w_{a,k})\)$.
\begin{conjecture}
Let $w_{a,k}$ be as in \eqref{eq:w} and let $k\geq 4$ be an even integer.
Then,
 $$\min_{a}\(\nu_2(w_{a,k})\)= \begin{cases}
s(k)-2 &\text{ if } k\neq 2^j,\\
0 &\text{ if } k=2^j.
 \end{cases}$$
 \end{conjecture}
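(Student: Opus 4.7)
The plan is to prove the conjecture by strong induction on $k$, refining the analysis used in the proof of Theorem~\ref{thm:minGk}. The key preliminary step is to upgrade Lemma~\ref{lem:ineq} from lower bounds to \emph{exact} $2$-adic valuations of the coefficients appearing in Popa's recurrence. By Kummer's theorem, $\nu_2\!\left(\binom{m+n}{m}\right)$ equals the number of carries when adding $m$ and $n$ in base $2$, which should yield clean closed formulas for
$$
\nu_2\left(\binom{k/2}{j}\frac{d_{j+1}d_{k-j-1}}{c_kd_k}\right) \quad \text{and} \quad \nu_2\left(\binom{k/2-2}{j}\frac{d_{j+1}d_{k-j-1}}{c_kd_k}\right)
$$
in terms of the binary expansions of $k$, $j$, and $k-2j-2$. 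The coefficients $\frac{d_{k-2}}{2c_kd_k}$ and $\frac{kd_{k/2}^2}{2c_kd_k}$ already admit exact formulas that are essentially visible inside the proof of Lemma~\ref{lem:ineq}.

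With these exact valuations in hand, I would expand $G_r$ in the basis $\{G_4^a G_6^b\}$ via Popa's recurrence and establish matching upper and lower bounds on $\min_a \nu_2(w_{a,r})$. For the lower bound $\min_a \nu_2(w_{a,r}) \geq s_2(r) - 2$ (respectively $\geq 0$ when $r$ is a power of $2$), combining the refined Lemma~\ref{lem:ineq} with the inductive hypothesis should be relatively direct: the valuation of any coefficient in the expansion of $G_{j+1}G_{r-j-1}$ is at least the sum of the inductive minima $(s_2(j+1)-2)+(s_2(r-j-1)-2)$ plus the contribution from the binomial prefactor, and a case analysis based on carries shows this always dominates $s_2(r)-2$. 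For the upper bound, the strategy is to exhibit a specific witness monomial $G_4^{a_\ast}G_6^{b_\ast}$ whose coefficient arises from a single pathway through the recurrence (or from a family of pathways with \emph{distinct} valuations, ruling out cancellation). A natural candidate is a monomial adapted to the binary expansion of $r$, chosen so that the carry pattern in Kummer's theorem forces uniqueness of the minimum.

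The main obstacle is the upper bound, and especially the exceptional case $r=2^j$. There the general formula would predict $s_2(r)-2=-1$, whereas the conjecture asserts that the minimum is $0$, so a \emph{systematic cancellation} among the terms of Popa's recurrence must occur in precisely this case. Pinning down the algebraic source of this cancellation is the most delicate point, and a purely inductive witness-finding argument seems likely to fail here because every obvious candidate monomial would give $\nu_2=-1$. A promising alternative route is to invoke Serre's theory of $2$-adic modular forms, which controls $E_k$ modulo powers of $2$ in terms of $k$ modulo powers of $2$; combined with the formula relating $G_k$ to $E_k$ and the Bernoulli number $B_k$ (together with the von Staudt--Clausen theorem, which pins down $\nu_2(B_k)$), this could supply the global structural information needed to handle the power-of-two case and, more generally, to determine the minimal valuation directly rather than purely through the recurrence.
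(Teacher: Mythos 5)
First, note that the statement you are trying to prove is stated in the paper only as a \emph{conjecture}: the author offers no proof, just numerical verification for $k\leq 3500$. So there is no paper argument to compare against, and the relevant question is whether your plan closes the problem. It does not. You have correctly identified the two hard points --- the sharp upper bound $\min_a \nu_2(w_{a,k}) \leq s_2(k)-2$, and the systematic cancellation in the case $k=2^j$ (where $s_2(k)-2=-1$ would contradict Theorem \ref{thm:minGk}) --- but for both you offer only a direction (a witness monomial ``adapted to the binary expansion of $r$''; an appeal to Serre's theory of $2$-adic modular forms) rather than an argument. Ruling out cancellation among the many pathways of Popa's recurrence that feed a single monomial $G_4^{a}G_6^{b}$ is exactly the content of the conjecture, and nothing in the proposal pins it down.

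Moreover, even the lower bound, which you describe as ``relatively direct,'' is not. The term-by-term estimate you sketch fails already for the derivative term: when $r\equiv 2\pmod 4$ one has $s_2(r-2)=s_2(r)-1$, and the contribution $\frac{d_{r-2}}{2c_rd_r}\cdot\frac{7a}{2}\,w_{a,r-2}$ is then only bounded below by $1+\nu_2(a)-1+(s_2(r-2)-2)=s_2(r)-3+\nu_2(a)$, which for odd $a$ falls short of the target $s_2(r)-2$. So the inductive hypothesis in the crude form $\min_a\nu_2(w_{a,k})\geq s_2(k)-2$ is not strong enough to propagate; you would need a refined hypothesis tracking how $\nu_2(w_{3a,k})$ varies with $a$ (compare Lemma \ref{lem:dk0andr}, which is precisely such a refinement in the special case $k=12\cdot 2^{\ell}$). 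Similarly, for the quadratic terms the inductive minima contribute $s_2(j+1)+s_2(r-j-1)-4=s_2(r)+c-4$ where $c$ is the number of carries in $(j+1)+(r-j-1)$, so when $c\leq 1$ you need the prefactor $\left(\binom{r/2}{j}+\binom{r/2-2}{j}\right)\frac{d_{j+1}d_{r-j-1}}{c_rd_r}$ to have strictly positive valuation, and the two binomial summands can individually have valuation $-1$ (as the paper's own proof of Lemma \ref{lem:ineq} shows), so their sum must be analyzed, not just each term via Kummer. In short: the proposal is a reasonable research plan, but both directions of the claimed equality contain genuine unproved steps, and the statement remains a conjecture.
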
 
 This has been verified for $k\leq 3500$.

\begin{lemma}
\label{lem:ckidki}
Let $k\equiv 0\pmod{12}$ and let $r\geq0$.
Write
\be
\label{eq:phirep}
 \varphi_k(X) = X^{\frac{k}{12}} + t_{k,\frac{k}{12}-1}X^{\frac{k}{12}-1} + \cdots + t_{k,1}X+t_{k,0}.
\ee
Then,
\bee
t_{k,r} =
\frac{\pi^k}{\zeta(k)}  (-1)^{k/12-r}
\sum_{a=0}^{r}    w_{3a,k} \frac{ 2^{2k/3-6r-2a-1}   }  
{    3^{k/4+3r} 5^{a+k/6} 7^{k/6-2a}    } 
\binom{k/12-a}{k/12-r}.
\eee
\end{lemma}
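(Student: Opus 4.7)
The plan is to start from Proposition \ref{prop:ekdelta}, which gives
$$\varphi_k(j) = \frac{1}{2\zeta(k)}\cdot \frac{G_k}{\Delta^{k/12}},$$
substitute the expansion \eqref{eq:w} of $G_k$ in terms of $G_4^aG_6^b$, rewrite each monomial as a polynomial in $j$, and then read off the coefficient of $X^r$. The first reduction is to observe that for $k\equiv 0\pmod{12}$, the constraint $4a+6b=k$ forces $b$ to be even (reduce modulo $6$) and $a$ to be divisible by $3$ (reduce modulo $3$). Writing $a=3a'$ and $b=2c$ gives $a'+c=k/12$, which explains why only the coefficients $w_{3a,k}$ with $0\leq a\leq k/12$ appear in the sum.

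Next I would convert $G_4$ and $G_6$ to the normalized Eisenstein series via $G_4=(\pi^4/45)E_4$ and $G_6=(2\pi^6/945)E_6$, using $\zeta(4)=\pi^4/90$ and $\zeta(6)=\pi^6/945$. This yields
$$\frac{G_4^{3a'}G_6^{2c}}{\Delta^{k/12}}=\frac{2^{2c}\pi^k}{45^{3a'}\cdot 945^{2c}}\cdot \frac{E_4^{3a'}E_6^{2c}}{\Delta^{k/12}}.$$
Since $j\Delta=E_4^3$ and $(j-1728)\Delta=E_6^2$ (the second from $1728\Delta=E_4^3-E_6^2$), one has $E_4^{3a'}E_6^{2c}=j^{a'}(j-1728)^c\,\Delta^{a'+c}$, and the exponent $a'+c=k/12$ exactly cancels the denominator. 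The binomial theorem then gives
$$[X^r]\bigl(X^{a'}(X-1728)^c\bigr)=\binom{c}{r-a'}(-1728)^{c-(r-a')},$$
and crucially $c-(r-a')=k/12-r$ is independent of $a'$, which is why $(-1)^{k/12-r}$ can be pulled out of the sum.

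Finally I would consolidate the prime powers. Using $45=3^2\cdot 5$, $945=3^3\cdot 5\cdot 7$ and $1728=2^6\cdot 3^3$, the factor of $3$ contributed by $45^{3a'}\cdot 945^{2c}$ is $3^{6a'+6c}=3^{k/2}$, which is $a'$-free and combines with $3^{3(k/12-r)}=3^{k/4-3r}$ from $1728^{k/12-r}$ to produce $3^{k/4+3r}$ in the denominator. Collecting factors of $2$ from $2^{2c}$, $1728^{k/12-r}=2^{k/2-6r}\cdot 3^{k/4-3r}$, and the $\frac{1}{2}$ from $\frac{1}{2\zeta(k)}$ yields $2^{2k/3-2a'-6r-1}$, while the factors of $5$ give $5^{3a'+2c}=5^{a'+k/6}$ and the factors of $7$ give $7^{2c}=7^{k/6-2a'}$, both in the denominator. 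Using $\binom{k/12-a'}{r-a'}=\binom{k/12-a'}{k/12-r}$ and absorbing the remaining $\pi^k/\zeta(k)$ overall prefactor produces exactly the claimed formula.

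There is no conceptual obstacle here; the entire argument is a direct substitution followed by tracking prime exponents. The only place where care is required is the combined bookkeeping in the last step, and in particular verifying that all $a'$-dependence cancels from the exponent of $3$ (so that the resulting denominator factor $3^{k/4+3r}$ can indeed be pulled out of the sum over $a$).
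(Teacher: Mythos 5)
Your proposal is correct and follows essentially the same route as the paper: substitute the expansion \eqref{eq:w} into Proposition \ref{prop:ekdelta}, convert each monomial $G_4^{3a}G_6^{k/6-2a}$ into $j^a(j-1728)^{k/12-a}$ times a tracked power of $2,3,5,7$, expand by the binomial theorem, and compare coefficients. All of your exponent bookkeeping checks out against the paper's intermediate expression $\frac{\pi^k}{3^{k/2}\zeta(k)}\sum_a \frac{w_{3a,k}\,2^{k/6-2a-1}}{5^{k/6+a}7^{k/6-2a}}\, j^a(j-1728)^{k/12-a}$.
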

\begin{proof}
We have
\begin{align*}
\frac{E_{k}}{\Delta^{\frac{k}{12}}} = \frac{G_{k}}{2 \zeta(k) \Delta^{\frac{k}{12}}}
&=  \frac{\sum_{a=0}^{k/12} w_{3a,k}  G_4^{3a} G_6^{k/6-2a}}
{2\zeta(k)\Delta^{k/12}} \\
&=  \frac{\pi^k}{3^{k/2} \zeta(k)} \sum_{a=0}^{k/12} 
   \frac{  w_{3a,k}\cdot  2^{k/6-2a-1} }{5^{k/6+a} 7^{k/6-2a} }  
   j^a (j-1728)^{k/12-a}\\
  &=  \frac{\pi^k }{3^{k/2}\zeta(k)}
  \sum_{a=0}^{k/12} \sum_{i=0}^{k/12-a} 
   \frac{  w_{3a,k}\cdot  2^{k/6-2a-1} }{5^{k/6+a} 7^{k/6-2a} }
  \binom{k/12-a}{i}j^{k/12-i} (-1728)^i.
  \end{align*}
 From Proposition \ref{prop:ekdelta} we see that
 \bee
\frac{E_{k}}{\Delta^{\frac{k}{12}}} = \varphi_k(j)=
j^{\frac{k}{12}} 
+ t_{k,\frac{k}{12}-1}j^{\frac{k}{12}-1} + \cdots + t_{k,1}j+t_{k,0}.
\eee
Comparing coefficients, the result follows.
\end{proof}

\begin{lemma}
\label{lem:dk0andr}
Let  $k= 12\cdot 2^{\ell}$ with $\ell\geq 0$ and let
$w_{a,k}$ be as in \eqref{eq:w}.
Then
$v_2(w_{0,k} ) = 0$ and
$v_2(w_{3a,k} ) \geq 1$ 
 for $ 1\leq a \leq \frac{k}{12}-1$.
\end{lemma}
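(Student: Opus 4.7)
The plan is to proceed by induction on $\ell\geq 0$. The base case $\ell=0$ ($k=12$) reduces to the single claim $\nu_2(w_{0,12})=0$, since the range $1\leq a\leq k/12-1$ is empty. This is immediate from the classical identity $691\,E_{12}=441\,E_4^3+250\,E_6^2$: translating to the $G$-normalization via $w_{0,12}=\tfrac{250}{691}\cdot\tfrac{2\zeta(12)}{(2\zeta(6))^2}$ and simplifying gives $w_{0,12}=25/143$, which is odd.

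For the inductive step, set $k=12\cdot 2^{\ell+1}$ and assume the lemma at $k'=k/2$. I would apply Proposition \ref{prop:popa} (divided through by $c_kd_k$) and extract the coefficient of each monomial $G_4^{3a}G_6^{k/6-2a}$ on both sides. Since $k+2$ is never a power of $2$ when $\ell\geq 0$, Lemma \ref{lem:valsum} gives $\nu_2((-1)^{k/2}+\binom{k}{k/2-1})=0$, and the identities in the proof of Lemma \ref{lem:ineq} yield
\bee
\nu_2\!\left(\frac{k\,d_{k/2}^2}{2c_kd_k}\right)=\nu_2(k/2-1)=0,\qquad \nu_2\!\left(\frac{d_{k-2}}{2c_kd_k}\right)=s_2(k/2)=s_2(3\cdot 2^{\ell+2})=2.
\eee
The ``main'' term $\frac{kd_{k/2}^2}{2c_kd_k}G_{k/2}^2$ contributes $\frac{kd_{k/2}^2}{2c_kd_k}\,w_{0,k'}^2$ to $w_{0,k}$, which by the inductive hypothesis has $\nu_2=0$ (hence is odd); and for $1\leq a\leq k/12-1$ it contributes $\frac{kd_{k/2}^2}{2c_kd_k}\sum_{a_1+a_2=a}w_{3a_1,k'}w_{3a_2,k'}$. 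A case analysis on whether each $a_i$ lies in $\{0,k'/12\}$ or in $[1,k'/12-1]$, combined with the inductive hypothesis together with Theorem \ref{thm:minGk} (when $a_i=k'/12$), shows every such sum has $\nu_2\geq 1$; the restriction $a\leq 2^{\ell+1}-1$ precisely rules out the degenerate diagonal case $a_1=a_2=k'/12$.

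The contributions from the $G_2G_{k-2}$ and $G_{k-2}'$ terms of the recurrence reduce, by the computation in the proof of Theorem \ref{thm:minGk}, to $-\tfrac{7d_{k-2}}{4c_kd_k}w_{1,k-2}$ for $w_{0,k}$ (and analogous expressions involving $w_{3a\pm 2,k-2}$ and $w_{3a+1,k-2}$ for higher $a$), each with $\nu_2\geq 2-1=1$ by the above and Theorem \ref{thm:minGk}. For the convolution terms $G_{j+1}G_{k-j-1}$ the binomial prefactor equals $\bigl(\binom{k-j-2}{k/2-2}+\binom{k-j-2}{k/2}\bigr)\big/\bigl((-1)^{k/2}+\binom{k}{k/2-1}\bigr)$, whose $2$-adic valuation I would analyze via Kummer's theorem on the carries in the base-$2$ additions forced by $k=12\cdot 2^{\ell+1}$; Theorem \ref{thm:minGk} then controls the remaining $w$-factors. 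The main obstacle is handling those ``exceptional'' $j$ for which this binomial prefactor is a $2$-adic unit: in such cases one must secure the missing factor of $2$ from the product $w_{0,j+1}w_{0,k-j-1}$, which requires auxiliary control on $\nu_2(w_{0,m})$ for various intermediate even weights $m<k$ not of the form $12\cdot 2^{\ell'}$. Such control can be obtained by a secondary application of Popa's recurrence to those weights $m$, bootstrapped from the already-established conclusions at $k''=12\cdot 2^{\ell'}$ with $\ell'\leq \ell$. Summing all three classes of contributions then yields $\nu_2(w_{0,k})=0$ and $\nu_2(w_{3a,k})\geq 1$ for $1\leq a\leq k/12-1$, completing the induction.
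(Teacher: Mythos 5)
Your base case is fine, and your analysis of the $G_{k/2}^2$ term (the pairing of cross terms, the exclusion of the diagonal case $a_1=a_2=k'/12$, and the computation $\nu_2\bigl(k\,d_{k/2}^2/(2c_kd_k)\bigr)=\nu_2(k/2-1)=0$) is correct. But the gap you yourself flag is genuine and fatal to the argument as written: the binomial prefactors of the convolution terms $G_{j+1}G_{k-j-1}$ in Popa's recurrence really are $2$-adic units for some $j$. Concretely, take $k=24$ and $j=9$: the relevant numerator is $\binom{13}{10}+\binom{13}{12}=286+13=299$, which is odd, while the denominator $(-1)^{12}+\binom{24}{11}$ is also odd, so the coefficient of $G_{10}G_{14}$ is a unit. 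Since $G_{10}$ and $G_{14}$ are odd multiples of $w_{1,10}G_4G_6$ and $w_{2,14}G_4^2G_6$ respectively, this term feeds directly into $w_{3,24}$, which you must show is even; Theorem \ref{thm:minGk} gives only $\nu_2(w_{1,10}),\nu_2(w_{2,14})\geq 0$, and the needed input $\nu_2(w_{2,14})\geq 1$ (which happens to be true) concerns a weight $14\neq 12\cdot 2^{\ell'}$ lying outside your inductive hypothesis. Your proposed fix --- ``auxiliary control at intermediate even weights by a secondary, bootstrapped application of Popa's recurrence'' --- is exactly the hard part of the problem and is asserted rather than proved, so the induction does not close.

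The paper sidesteps this entirely by using a different recurrence for this lemma: Rademacher's identity $(k/2-3)(k-1)(k+1)\,G_k=3\sum_{p=2}^{k/2-2}(2p-1)(k-2p-1)\,G_{2p}G_{k-2p}$, pairing the terms for $p$ and $k/2-p$. For $k=12\cdot 2^{m+1}$ the scalar prefactor $3/\bigl((k/2-3)(k-1)(k+1)\bigr)$ is a $2$-adic unit, every paired cross term carries an explicit factor $2$ (with $2$-integral coefficients by Theorem \ref{thm:minGk}, which suffices --- no parity information at intermediate weights is ever needed), and the single unpaired term is $(k/2-1)^2G_{k/2}^2$, an odd multiple of the square. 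This reduces the whole inductive step to precisely the $G_{k/2}^2$ analysis you already carried out correctly, and explains why the doubling sequence $k=12\cdot 2^{\ell}$ is the natural one. If you replace the Popa step by Rademacher's recurrence, the remainder of your argument goes through essentially verbatim.
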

\begin{proof}
By induction on $\ell$.
From \cite[(59.6)]{rademacherbook}, we have
\be
\label{eq:recurrence}
(k/2-3)(k-1)(k+1) G_{k}=
3\sum_{p=2}^{k/2-2}  (2p-1)(k-2p-1) G_{2p}G_{k-2p}.
\ee
 Note that 
 $G_{12} = \frac{25}{143} G_6^2 + \frac{18}{143} G_4^3$,
 so the result holds for $\ell = 0$.
Suppose that for some $m\geq 0$ we have
$\nu_2(w_{0,12\cdot 2^{m}})=0$ and $\nu_2(w_{3a,12\cdot 2^{m}})\geq 1$ for 
$1\leq a \leq \frac{k}{12}-1$.
Then, 
paring the terms arising from $p$ and $12\cdot 2^m-p$ in 
\eqref{eq:recurrence} gives
\begin{multline*}
G_{12\cdot 2^{m+1}} = \frac{3}{(12\cdot 2^m - 3)(12\cdot 2^{m+1}-1)
(12\cdot 2^{m+1}+1)} 
\\
\(2\sum_{p=2}^{12\cdot 2^{m-1}-1}
(2p-1)(12\cdot 2^{m+1}-2p-1) G_{2p} G_{12\cdot 2^{m+1}-2p} + (12\cdot 2^m-1)^2 G_{12\cdot 2^m}^2\).
\end{multline*}
Thus, using Theorem \ref{thm:minGk}, we have $\nu_2(w_{0,12\cdot 2^{m+1}})=0$
and  
$\nu_2(w_{3a,12\cdot 2^{m+1}})\geq 1$
 for $ 1\leq a \leq \frac{k}{12}-1$.
\end{proof}

\begin{corollary}
\label{cor:v2t}
Let $k= 12\cdot 2^{\ell}$ with $\ell \geq 0$ and let $t_{k,r}$ be as in \eqref{eq:phirep}.
Then,
  $\nu_{2}(t_{k,0}) = \frac{2k-3}{3}$ 
and for $ 1\leq r \leq k/12-1$ we have
 $\nu_{2}(t_{k,r}) \geq \frac{2k}{3}-8r$.
 \end{corollary}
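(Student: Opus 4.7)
The plan is to substitute directly into the formula from Lemma \ref{lem:ckidki} and bound the $2$-adic valuation of $t_{k,r}$ termwise, using Lemma \ref{lem:dk0andr} to control $\nu_2(w_{3a,k})$. The odd-prime denominators $3^{k/4+3r}$, $5^{a+k/6}$, $7^{k/6-2a}$ contribute $0$ to $\nu_2$, and the integer binomial coefficient $\binom{k/12-a}{k/12-r}$ contributes $\geq 0$, so the $a$-th summand will satisfy
\[
\nu_2 \;\geq\; \nu_2\!\left(\tfrac{\pi^k}{\zeta(k)}\right) + \nu_2(w_{3a,k}) + \tfrac{2k}{3} - 6r - 2a - 1.
\]

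The first step I would carry out is to show that the prefactor $\pi^k/\zeta(k)$ is $2$-adically invisible. Euler's formula $\zeta(k) = \tfrac{(-1)^{k/2+1}(2\pi)^k B_k}{2\,k!}$ yields $\pi^k/\zeta(k) = (-1)^{k/2+1} k!/(2^{k-1} B_k)$ and hence $\nu_2(\pi^k/\zeta(k)) = 1 - s_2(k) - \nu_2(B_k)$. For $k = 12\cdot 2^\ell$ we have $s_2(k) = s_2(12) = 2$; and the von Staudt--Clausen theorem identifies the denominator of $B_k$ with $\prod_{p-1\mid k} p$, in which $2$ appears exactly once (contributed by $p-1 = 1$). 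Hence $\nu_2(B_k) = -1$ and $\nu_2(\pi^k/\zeta(k)) = 0$.

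The rest will be bookkeeping. For $r = 0$ the sum reduces to the single term $a = 0$, and Lemma \ref{lem:dk0andr} gives $\nu_2(w_{0,k}) = 0$ exactly, yielding $\nu_2(t_{k,0}) = \tfrac{2k}{3} - 1 = \tfrac{2k-3}{3}$. For $1 \leq r \leq k/12 - 1$, the same lemma supplies $\nu_2(w_{0,k}) = 0$ and $\nu_2(w_{3a,k}) \geq 1$ for $1 \leq a \leq r$, producing per-summand bounds of $\tfrac{2k}{3} - 6r - 1$ when $a = 0$ and $\tfrac{2k}{3} - 6r - 2a$ when $1 \leq a \leq r$. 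The minimum, $\tfrac{2k}{3} - 8r$, is attained at $a = r$; the $a = 0$ bound is no smaller since $2r - 1 \geq 1$ for $r \geq 1$. Therefore $\nu_2(t_{k,r}) \geq \tfrac{2k}{3} - 8r$.

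The hard part will be the first step: the identity $\nu_2(\pi^k/\zeta(k)) = 0$ is clean but requires combining Euler's closed form with von Staudt--Clausen and the specific arithmetic fact $s_2(12 \cdot 2^\ell) = 2$, which produces the precise cancellation $1 - s_2(k) = \nu_2(B_k)$. Everything else is a direct termwise minimization using the two lemmas already established.
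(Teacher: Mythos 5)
Your proposal is correct and follows essentially the same route as the paper: compute $\nu_2(\pi^k/\zeta(k))=0$ via Euler's formula and $\nu_2(B_k)=-1$, then bound the terms of the sum in Lemma \ref{lem:ckidki} using the valuations of $w_{3a,k}$ from Lemma \ref{lem:dk0andr}. Your write-up is in fact more explicit than the paper's (which simply asserts $\nu_2(\zeta(k)/\pi^k)=0$ and cites the two lemmas), usefully spelling out that the vanishing of $\nu_2(\pi^k/\zeta(k))$ relies on $s_2(12\cdot 2^{\ell})=2$ and carrying out the termwise minimization showing the $a=r$ term governs the bound.
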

\begin{proof}
Recall (see, for example, \cite[(5.5), (9.1)]{rademacherbook}) that 
 \bee
\nu_2(B_{2n})=-1,
\eee 
and that
\bee
\zeta(2n) = (-1)^{n-1} \frac{(2\pi)^{2n} B_{2n}}{2(2n)!}.
\eee
Thus,
 $\nu_2\( \frac{\zeta(k)}{\pi^{k}}\) =0$.
The result  now follows from Lemmas \ref{lem:dk0andr} and \ref{lem:ckidki}.
 \end{proof}
 We can now prove the main theorem.

 {\it Proof of Theorem \ref{thm:main}}:
 Let $t_{k,r}$ be as in \eqref{eq:phirep}.
For 
$0\leq r\leq \frac{k}{12}-1$, Corollary \ref{cor:v2t} gives
\be
 \frac{\nu_2(t_{k,0})}{k/12} \leq \frac{\nu_2(t_{k,r})}{k/12-r}
 \ee
and
$$\(v_2(t_{k,0}), \frac{k}{12}\) = (2k/3-1, 2^{\ell})=(2^{2 \ell+1} \cdot 3^{\ell-1} -1, 2^{\ell})= 1.$$
Thus,
the result follows from
Proposition \ref{prop:dumas}
with $p= 2$.

 \noindent {\bf Acknowledgements.}
The author thanks Scott Ahlgren for his insightful suggestions.
The author was partially supported by
the Alfred P. Sloan Foundation's MPHD Program, awarded in 2017.
\bibliographystyle{alpha} 
\bibliography{poly}

\end{document}